\theoremstyle{definition}
\newtheorem{Thm}{Theorem}[section]
\newtheorem{Lem}[Thm]{Lemma}
\newtheorem{Cor}[Thm]{Corollary}
\newtheorem{Prop}[Thm]{Proposition}
\newtheorem{Rem}[Thm]{Remark}
\def\Stab{\mathrm{Stab}}
\def\Int{\mathrm{Int}}
\def\Fix{\mathrm{Fix}}
\def\Homeo{\mathrm{Homeo}}
\def\Ker{\mathrm{Ker}}
\def\N{\mathbb{N}}
\title[$C^*$-simplicity for convergence groups]
{$C^*$-simplicity for groups with non-elementary convergence group actions}
\author[Matsuda, Oguni, Yamagata]
{Yoshifumi Matsuda, Shin-ichi Oguni, Saeko Yamagata}
\address{Yoshifumi Matsuda, Graduate School of Mathematical Sciences, 
University of Tokyo, 
3-8-1 Komaba, 
Meguro-ku, 
Tokyo, 
153-8914 Japan}
\email{ymatsuda@ms.u-tokyo.ac.jp}
\address{Shin-ichi Oguni, Department of Mathematics, Faculty of Science,
Ehime University,
2-5 Bunkyo-cho, 
Matsuyama, 
Ehime, 
790-8577 Japan}
\email{oguni@math.sci.ehime-u.ac.jp}
\address{Saeko Yamagata, Akashi National College of Technology, 674-8501 Akashi, Japan}
\email{yamagata@akashi.ac.jp}
\begin{document}

\maketitle

\begin{abstract}
We prove that 
a countable group with an effective minimal non-elementary convergence group action 
is a Powers group. 
More strongly we prove that it is a strongly Powers group and thus 
its non-trivial subnormal subgroups
are $C^*$-simple.
\\

\noindent
Keywords:
convergence group actions;
Powers groups; 
reduced group $C^*$-algebras;
relatively hyperbolic groups.\\

\noindent
2010MSC:
37B05;
20F65;
22D25.
\end{abstract}

\section{Introduction}\label{intro}
$C^*$-simplicity for countable groups have been studied very much 
since Powers proved that non-abelian free groups are $C^*$-simple
(\cite{Pow75}), where a countable group is $C^*$-simple 
if the reduced $C^*$-algebra of it is simple.
For example, it is known that torsion-free hyperbolic groups
which are not 
cyclic
are $C^*$-simple (\cite{Har88}). 
Also some criteria for $C^*$-simplicity for groups are established 
(refer to \cite{B-H}, \cite{Har} and \cite{H-P}).
In particular Powers groups are $C^*$-simple (\cite[Theorem 13]{Har}). 
See \cite[Definition 9]{Har} about the definition of Powers groups.

The following is our main theorem:
\begin{Thm}\label{P}
Let $G$ be a countable group. 
If $G$ has an effective minimal non-elementary convergence group action, 
then $G$ is a Powers group. 
\end{Thm}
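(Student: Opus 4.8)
The plan is to verify the combinatorial Powers condition directly from the dynamics, using a fixed basepoint to transport an open ``ping-pong'' configuration in the compactum $M$ into a partition of $G$. Since the action is minimal we may take $M=\Lambda$, the limit set, which is perfect because the action is non-elementary. Let $F\subseteq G\setminus\{e\}$ be finite and let $N\ge 1$. I would produce a nonempty open set $\Omega\subseteq M$, loxodromic elements $\gamma_1,\dots,\gamma_N$ with repelling fixed points $\gamma_i^-\in\Omega$, and a basepoint $x_0$, and then set
\[
A=\bigcup_{i=1}^N U_i^-,\qquad D=\{g\in G:\ gx_0\notin A\},\qquad C=G\setminus D=\{g\in G:\ gx_0\in A\},
\]
where $U_i^-$ is a small neighbourhood of $\gamma_i^-$. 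The elements required by the Powers property will be $g_i=\gamma_i^{k}$ for a sufficiently large power $k$, and $x_0$ is any point with $x_0\notin A$.

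With this encoding the two Powers requirements become statements about subsets of $M$. Indeed $fC=\{h:\ hx_0\in fA\}$, so $fC\cap C=\emptyset$ holds as soon as $A\cap fA=\emptyset$; and $g_iD=\{h:\ hx_0\in g_i(M\setminus A)\}$, so the sets $g_iD$ are pairwise disjoint as soon as the sets $g_i(M\setminus A)$ are. To arrange the first condition I would choose $\Omega$ so that $f\overline{\Omega}\cap\overline{\Omega}=\emptyset$ for every $f\in F$ and keep $A\subseteq\Omega$; then $A\cap fA\subseteq\Omega\cap f\Omega=\emptyset$ gives $fC\cap C=\emptyset$ (the conditions for $f$ and for $f^{-1}$ coincide, since $fC\cap C=\emptyset\iff C\cap f^{-1}C=\emptyset$). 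To arrange the second condition I would use the north--south dynamics of loxodromic elements: for each $i$, high powers $\gamma_i^{k}$ carry $M\setminus U_i^-$ into an arbitrarily small neighbourhood $V_i$ of $\gamma_i^+$. Choosing the $\gamma_i^+$ distinct makes the $V_i$ pairwise disjoint, and since $M\setminus A\subseteq M\setminus U_i^-$ we obtain $g_i(M\setminus A)\subseteq V_i$, whence the $g_iD$ are disjoint.

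The whole construction therefore rests on one geometric lemma: for every finite $F\subseteq G\setminus\{e\}$ there is a nonempty open set $\Omega$ with $f\overline{\Omega}\cap\overline{\Omega}=\emptyset$ for all $f\in F$ which moreover contains the repelling fixed points of $N$ loxodromic elements having pairwise distinct attracting fixed points. The first half I would deduce from the Baire category theorem once I show that $\Fix(f)$ has empty interior for each $f\neq e$: then $\bigcup_{f\in F}\Fix(f)$ is nowhere dense, so some $\xi$ is moved by every $f\in F$, and a small enough ball $\Omega$ about $\xi$ satisfies $f\overline{\Omega}\cap\overline{\Omega}=\emptyset$ for all $f\in F$ simultaneously. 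To see that $\Fix(f)$ has empty interior, suppose it contained an open set $U$. By minimality the orbit of a loxodromic attracting point is dense, so there is a loxodromic $\delta$ with $\delta^+\in U$; by north--south dynamics $\delta^{k}$ carries $M\setminus W$ into $U\subseteq\Fix(f)$ for every neighbourhood $W$ of $\delta^-$ and all large $k$. Then $\delta^{-k}f\delta^{k}$ fixes $M\setminus W$ pointwise, hence fixes a dense open set, hence equals the identity; effectiveness then forces $f=e$, a contradiction. The second half uses non-elementarity: the limit set is infinite and loxodromic fixed points are dense in it, so I can select $N$ loxodromics whose repelling points are distinct points of $\Omega$ and whose attracting points are distinct.

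The main obstacle is precisely this geometric lemma, and within it the emptiness of the interior of $\Fix(f)$: this is exactly where effectiveness, minimality and the convergence (north--south) dynamics must be combined, and where torsion elements, which a priori might fix large sets, get ruled out. Once that, together with the abundance and density of loxodromic fixed points supplied by non-elementarity, is in hand, the remaining steps, namely choosing $U_i^-$ and $V_i$ pairwise disjoint, taking $k$ large enough for the north--south estimate, and checking $x_0\notin A$, are routine, and the partition $G=C\sqcup D$ together with $g_i=\gamma_i^{k}$ witnesses the Powers property.
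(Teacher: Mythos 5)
Your overall architecture is sound and in fact runs parallel to the paper's: the ping-pong argument converting north--south dynamics of loxodromic elements with distinct fixed points into the Powers partition is exactly the content of \cite[Proposition 11]{Har}, which the paper invokes rather than reproves, and your Baire-category step producing a point (hence a small ball $\Omega$) moved off itself by every $f\in F$ is the paper's Proposition \ref{freept}. As you correctly isolate, everything reduces to the key lemma that $\Fix(\rho(f))$ has empty interior for every $f\neq e$ (the paper's Lemma \ref{noint}); this is precisely where effectiveness and torsion must be confronted.

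It is exactly there that your argument has a genuine gap. You argue: $\delta^{k}$ maps $M\setminus W$ into $U\subseteq\Fix(f)$, so $\delta^{-k}f\delta^{k}$ fixes $M\setminus W$ pointwise, ``hence fixes a dense open set, hence equals the identity.'' But $M\setminus W$ is neither open nor dense: $W$ contains an open neighbourhood of $\delta^-$, so the closure of $M\setminus W$ is contained in $M\setminus\Int(W)\neq M$, and the conclusion does not follow. A homeomorphism fixing pointwise a proper closed set with nonempty interior need not be trivial --- indeed, whether a nontrivial element can fix a set with nonempty interior is exactly what you are trying to prove, so this step begs the question. Nor can you shrink $W$ to $\{\delta^-\}$: the exponent $k$ needed for $\delta^{k}(M\setminus W)\subseteq U$ depends on $W$, so no single element $\delta^{-k}f\delta^{k}$ is shown to fix the dense set $M\setminus\{\delta^-\}$. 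This is the torsion issue you yourself flag: for $f$ of infinite order the convergence property would finish the job, but a finite-order $f$ whose conjugates each fix only some $M\setminus W$ is not excluded. Two repairs are available. The paper's (Lemma \ref{noint}): by \cite[Theorem 2R]{Tuk94} choose a loxodromic $l$ with one fixed point $r\in M\setminus\Fix(f)$ and the other fixed point $a\in\Int(\Fix(f))$; then $flf^{-1}$ is loxodromic with fixed-point set $\{fr,a\}$, which meets $\Fix(l)=\{r,a\}$ without being equal to it (as $fr\neq r$), contradicting the dichotomy that fixed-point sets of loxodromic elements coincide or are disjoint (\cite[Theorem 2G]{Tuk94}). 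Alternatively, your line of attack can be completed: taking neighbourhoods $W_j$ shrinking to $\{\delta^-\}$ and $h_j=\delta^{-k_j}f\delta^{k_j}$ fixing $M\setminus W_j$ pointwise, the convergence property forbids the $h_j$ from being infinitely many distinct elements (a convergence subsequence would force every point of $M$ other than its repelling point and $\delta^-$ to equal its attracting point, impossible as $M$ is infinite), so some single $h$ recurs, fixes $M\setminus\{\delta^-\}$ pointwise, hence is the identity by continuity, and effectiveness gives $f=e$. With either repair, the rest of your proof goes through.
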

\noindent
On the above the case where $G$ is torsion-free is known 
(\cite[Corollary 12 (iv)]{Har}). 
See \cite[Corollary 12]{Har} and \cite{H-P} for other examples of Powers groups. 

We give two corollaries.
First one is the following: 
\begin{Cor}\label{simple}
Let $G$ be a countable group 
with a minimal non-elementary convergence group action. 
Then the following are equivalent:
\begin{enumerate}
\item[(i)] the action is effective;
\item[(ii)] $G$ is a Powers group;
\item[(iii)] the reduced $C^*$-algebra of $G$ is simple;
\item[(iv)] the reduced $C^*$-algebra of $G$ has a unique normalized trace;
\item[(v)] $G$ has infinite conjugacy classes; 
\item[(vi)] $G$ does not have non-trivial amenable normal subgroups;
\item[(vii)] $G$ does not have non-trivial finite normal subgroups;
\item[(viii)] all minimal non-elementary convergence group actions of $G$ are effective.
\end{enumerate}
\end{Cor}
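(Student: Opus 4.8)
The plan is to organize the eight conditions into a single web of implications anchored by Theorem~\ref{P} and by one dynamical observation about convergence actions. The observation is that \emph{the kernel of any minimal non-elementary convergence group action of $G$ is finite}: if $G$ acts on a compact space $M$ as such a group, then $M$ has infinitely many points, the action on the space of distinct triples of $M$ is properly discontinuous, and the kernel $K$ fixes $M$ pointwise, hence stabilizes every triple $t$; since a single triple is compact, proper discontinuity makes $\Stab(t)$ finite, so $K\subseteq\Stab(t)$ is finite. As $K$ is normal, this is exactly what links the dynamical conditions $(\mathrm{i})$ and $(\mathrm{viii})$ to the algebraic condition $(\mathrm{vii})$.

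First I would close the core cycle $(\mathrm{i})\Rightarrow(\mathrm{ii})\Rightarrow(\mathrm{iii})\Rightarrow(\mathrm{vi})\Rightarrow(\mathrm{vii})\Rightarrow(\mathrm{i})$. Here $(\mathrm{i})\Rightarrow(\mathrm{ii})$ is precisely Theorem~\ref{P}; $(\mathrm{ii})\Rightarrow(\mathrm{iii})$ is that Powers groups are $C^*$-simple (\cite[Theorem 13]{Har}); $(\mathrm{iii})\Rightarrow(\mathrm{vi})$ is the standard necessary condition that a $C^*$-simple group has no non-trivial amenable normal subgroup (\cite{Har}); $(\mathrm{vi})\Rightarrow(\mathrm{vii})$ holds because finite groups are amenable; and $(\mathrm{vii})\Rightarrow(\mathrm{i})$ follows from the observation above, since the kernel of the given action is then a finite normal subgroup and hence trivial. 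This already yields the equivalence of $(\mathrm{i})$, $(\mathrm{ii})$, $(\mathrm{iii})$, $(\mathrm{vi})$, $(\mathrm{vii})$.

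Next I would attach the three remaining conditions, each as a statement sandwiched between two members of the core, so that no extra cycles are needed. For $(\mathrm{viii})$: $(\mathrm{vii})\Rightarrow(\mathrm{viii})$ because the finite kernel of \emph{every} minimal non-elementary convergence action is trivial under $(\mathrm{vii})$, while $(\mathrm{viii})\Rightarrow(\mathrm{i})$ is immediate since the given action is one such action. For $(\mathrm{v})$: $(\mathrm{iii})\Rightarrow(\mathrm{v})$ since a simple unital $C^*$-algebra has center $\mathbb{C}1$, so a non-trivial finite conjugacy class cannot occur, its class-sum being a non-scalar central element of the reduced $C^*$-algebra; and $(\mathrm{v})\Rightarrow(\mathrm{vii})$ since a non-trivial finite normal subgroup would contain an element with finite conjugacy class. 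For $(\mathrm{iv})$: $(\mathrm{ii})\Rightarrow(\mathrm{iv})$ is again part of the Powers property (\cite[Theorem 13]{Har}), and $(\mathrm{iv})\Rightarrow(\mathrm{vii})$ because a non-trivial finite normal subgroup $N$ yields the central projection $\frac{1}{|N|}\sum_{n\in N}n$, and hence a normalized trace distinct from the canonical one.

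The main obstacle is not the $C^*$-algebraic bookkeeping, which is a routine assembly of cited necessary conditions for $C^*$-simplicity and the unique trace property together with Theorem~\ref{P}, but rather correctly isolating and proving the dynamical lemma that the kernel is finite: one must verify that proper discontinuity on triples genuinely applies to a minimal non-elementary action, so that distinct triples exist and their stabilizers are finite. Once this kernel-finiteness is in hand, all eight conditions collapse onto $(\mathrm{vii})$ and the equivalences follow.
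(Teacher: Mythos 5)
Your proposal is correct, and its logical skeleton is essentially the paper's: Theorem \ref{P} supplies (i) $\Rightarrow$ (ii), standard necessary conditions for $C^*$-simplicity and the unique trace property carry (ii) down to (vii), and finiteness of the kernel of any minimal non-elementary convergence action closes the loop back to (viii) and (i). Two differences are worth flagging. First, where the paper simply cites \cite[Proposition 3]{B-H} for the implications (iii), (iv) $\Rightarrow$ (v), (vi), you prove the relevant pieces by hand (the class-sum argument for (iii) $\Rightarrow$ (v), and the central projection $\frac{1}{|N|}\sum_{n\in N}\lambda(n)$ giving a second normalized trace for (iv) $\Rightarrow$ (vii)); these arguments are correct and make the proof more self-contained on the $C^*$-side. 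Second, and more substantively, your key dynamical lemma (finiteness of the kernel) is derived from proper discontinuity of the action on the space of distinct triples. Under the definition of convergence action used in the paper (via convergence sequences), proper discontinuity on triples is not the definition but an equivalent characterization --- a theorem, see \cite{Bow99a} --- so as written your lemma silently imports that equivalence. The paper instead gets finiteness directly from its definition (Section \ref{conv}): an infinite kernel would contain a convergence sequence $\{g_i\}$, but the identity maps $\rho(g_i)=\mathrm{id}$ cannot converge locally uniformly to a constant on the complement of a single point when $X$ has at least three points. You should either cite the equivalence or substitute this direct argument; with that repair (or citation) your proof is complete and needs nothing else.
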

\noindent
See Section \ref{conv} about convergence group actions. 
Corollary \ref{simple} implies 
\cite[Corollary 2]{A-M}, which deals with the case of 
relatively hyperbolic groups in the sense of Osin. 
We remark that if a countable group 
$G$ is a properly relatively hyperbolic group in the sense of Osin
and is not virtually cyclic, then $G$ has a minimal non-elementary convergence group action
(see Section \ref{conv}). 

Recall that 
each group has a unique maximal amenable normal subgroup, 
which is called its amenable radical.
The second corollary is the following: 
\begin{Cor}\label{kernel'}
Let $G$ be a countable group 
with a minimal non-elementary convergence group action. 
Then the amenable radical $R_a(G)$ is finite and equal to the kernel of the action.
Also $G/R_a(G)$ is a strongly Powers group.
\end{Cor}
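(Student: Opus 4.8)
The plan is to first pin down the kernel $K$ of the action and identify it with $R_a(G)$, and then to run an induction along a subnormal chain to obtain the strongly Powers conclusion. Throughout, let $M$ denote the compact space on which $G$ acts as a convergence group.

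First I would show that $K$ is finite. Since the action is a convergence group action, the induced action on the space of distinct triples of points of $M$ is properly discontinuous; as $K$ fixes every point of $M$ it fixes every such triple, hence $K$ is contained in the stabilizer $\Stab(\tau)$ of any single triple $\tau$, which is finite. Thus $K$ is finite, and being the kernel of an action it is normal; as a finite group it is amenable, so $K \subseteq R_a(G)$. For the reverse inclusion, note that the action descends to an effective minimal non-elementary convergence group action of $G/K$ on $M$, so Corollary~\ref{simple} applies to $G/K$ and, by (vi), $G/K$ has no non-trivial amenable normal subgroup. The image of $R_a(G)$ in $G/K$ is a quotient of an amenable group, hence amenable, and it is normal as the image of a normal subgroup; therefore it is trivial, which gives $R_a(G) \subseteq K$. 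Hence $R_a(G) = K$, and it is finite.

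It remains to prove that $G' := G/R_a(G) = G/K$ is a strongly Powers group, i.e.\ that every non-trivial subnormal subgroup of $G'$ is a Powers group. Fix such a subgroup $H$ together with a subnormal chain $H = H_0 \trianglelefteq H_1 \trianglelefteq \dots \trianglelefteq H_n = G'$. I would prove by downward induction on $i$ that each $H_i$ admits an effective minimal non-elementary convergence group action on $M$; applying Theorem~\ref{P} to the countable group $H_0 = H$ then finishes the proof. The base case $i = n$ is the given effective minimal non-elementary action of $G'$. For the inductive step, assume $H_{i+1}$ has such an action. By Corollary~\ref{simple} applied to $H_{i+1}$, condition (vii) holds, so $H_{i+1}$ has no non-trivial finite normal subgroup; since $H_i \trianglelefteq H_{i+1}$ is non-trivial (it contains $H_0 \neq \{1\}$), it is infinite. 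The restricted action of $H_i$ on $M$ is again a convergence group action, and it is effective because the $H_{i+1}$-action is. As $H_i$ is infinite its limit set $\Lambda(H_i)$ is non-empty, and because $H_i \trianglelefteq H_{i+1}$ this limit set is closed and $H_{i+1}$-invariant; by minimality of the $H_{i+1}$-action it equals $M$. Hence $H_i$ is non-elementary, and since a non-elementary convergence group acts minimally on its limit set, the $H_i$-action on $M = \Lambda(H_i)$ is minimal. This completes the induction.

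The main obstacle I anticipate lies in the inductive step, namely in verifying that each intermediate subgroup again satisfies the full hypotheses of Theorem~\ref{P}. The delicate points are: (a) ensuring $H_i$ is infinite, for which the absence of non-trivial finite normal subgroups in $H_{i+1}$ (supplied by Corollary~\ref{simple}) is essential, since a finite group would have empty limit set and the argument would collapse; and (b) promoting ``full limit set'' to ``minimal action'', which relies on the standard fact that the action of a non-elementary convergence group on its limit set is minimal. One must also confirm that the convergence property and effectiveness pass to subgroups, both immediate from restriction, and that normality of $H_i$ in $H_{i+1}$ makes $\Lambda(H_i)$ invariant under $H_{i+1}$ so that minimality of the ambient action can be invoked.
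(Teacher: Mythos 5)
Your proposal is correct, and its overall architecture (identify the kernel $K$ with $R_a(G)$, pass to the quotient, induct along a subnormal chain, finish with Theorem~\ref{P}) matches the paper's; however, the supporting ingredients are genuinely different. The paper proves finiteness of $R_a(G)$ dynamically: an infinite amenable normal subgroup contains no non-abelian free subgroup, so by Tukia's Theorem~2U its restricted action would be elementary, contradicting Proposition~\ref{normal}; it then identifies $R_a(G)$ with $\Ker\rho$ via Proposition~\ref{ker} (the kernel is the \emph{maximal} finite normal subgroup, proved in the appendix). You instead bootstrap from Corollary~\ref{simple}: the image of $R_a(G)$ in $G/K$ is an amenable normal subgroup of a group satisfying (i), hence trivial by (vi). Likewise, in the induction the paper quotes Proposition~\ref{ker} (each successive subgroup is infinite) and Proposition~\ref{normal} (the restricted action is again minimal non-elementary), whereas you use (i)$\Rightarrow$(vii) of Corollary~\ref{simple} for infiniteness and give a direct minimality argument: $\Lambda(H_i)$ is non-empty, closed and $H_{i+1}$-invariant by normality, hence equals the whole space by minimality of the ambient action, and a non-elementary convergence group acts minimally on its limit set. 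Both routes are valid and non-circular, since Corollary~\ref{simple} is established before Corollary~\ref{kernel'}. What each buys: the paper's route is purely dynamical, resting only on Tukia's theorems, and gives the extra information that the kernel is the maximal finite normal subgroup; your route is shorter given the paper's earlier results, but it channels purely group-theoretic conclusions through the operator-algebraic implications hidden in Corollary~\ref{simple} (Powers $\Rightarrow$ $C^*$-simple $\Rightarrow$ no non-trivial amenable normal subgroup), and your minimality argument is in effect a streamlined special case of Proposition~\ref{normal} that works precisely because the ambient action is minimal. Two details you invoke without proof are standard and harmless: finiteness of the kernel (your properly-discontinuous-on-triples argument uses Bowditch's characterization; the paper argues more simply that $\Ker\rho$ admits no convergence sequence), and the fact that the action descends to a convergence action of $G/K$ (the paper also uses this implicitly for $G/R_a(G)$).
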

\noindent
Here a group is called a strongly Powers group when its non-trivial subnormal subgroups
are Powers groups (see \cite[1. Introduction]{H-P}).

It follows from Corollaries \ref{simple} and \ref{kernel'}
that a countable group 
with an effective minimal non-elementary convergence group action
is a strongly Powers group and thus 
its non-trivial subnormal subgroups are $C^*$-simple (see Corollary \ref{Powers'}).

\section{Properties of convergence group actions}\label{conv}
The study of convergence groups was initiated in \cite{G-M87}. 
In this section we recall some definitions and properties related to convergence group actions
(refer to \cite{Tuk94}, \cite{Fre97} and \cite{Bow99a}).

Let $G$ be a countable group, 
$X$ be a compact metrizable space and 
$\rho:G\to \Homeo(X)$ be a homomorphism. 
The pair $(\rho,X)$ is a convergence group action if 
$X$ has at least three points, 
and for any infinite sequence $\{g_i\}$ of mutually different elements of $G$, 
there exist a subsequence $\{g_{i_j}\}$ of $\{g_i\}$
and two points $r,a\in X$ such that 
$\rho(g_{i_j})|_{X\setminus \{r\}}$ converges to $a$ uniformly on compact subsets of 
$X\setminus \{r\}$ and also 
$\rho(g_{i_j}^{-1})|_{X\setminus \{a\}}$ converges to $r$ uniformly on compact subsets of 
$X\setminus \{a\}$. 
The sequence $\{g_{i_j}\}$
is called a convergence sequence of $(\rho, X)$
and also the points $r$ and $a$ are called 
the repelling point of $\{g_{i_j}\}$ and 
the attracting point of $\{g_{i_j}\}$, respectively.
When we consider a convergence group action $(\rho,X)$, 
for $l\in G$, we call $\rho(l)$ a loxodromic element
if $\rho(l)$ is of infinite order
and has exactly two fixed points.
The sequence $\{l^i\}_{i\in \N}$ is a convergence sequence of $(\rho,X)$
with the repelling point $r$ and the attracting point $a$, 
which are distinct and fixed by $\rho(l)$.
Hence we call $r$ the repelling fixed point of $\rho(l)$ and 
$a$ the attracting fixed point of $\rho(l)$.

Let $(\rho,X)$ be a convergence group action of a countable group $G$.
Since $X$ has at least three points, 
$\Ker\rho$ has no convergence sequences 
and thus $\Ker\rho$ is finite. 
The set of all repelling points and attracting points 
is equal to the limit set $\Lambda(\rho)$
(\cite[Lemma 2M]{Tuk94}). 
The cardinality of $\Lambda(\rho)$ is $0$, $1$, $2$ or $\infty$ 
(\cite[Theorem 2S, Theorem 2T]{Tuk94}). 
We remark that $\#\Lambda(\rho)=0$ if $G$ is finite by definition. 
Also it is well-known that $G$ is virtually infinite cyclic
if $\#\Lambda(\rho)=2$ 
(see \cite[Lemma 2Q,Lemma 2N,Theorem 2I]{Tuk94} and also \cite[Example 1.3]{Fre97}).
We call $(\rho, X)$ a non-elementary convergence group action  
if $\#\Lambda(\rho)=\infty$. 
We note that if a countable group $G$ has 
a non-elementary convergence group action, 
then the induced action on the limit set
is a minimal non-elementary convergence group action.

We briefly recall the notion of relatively hyperbolic group,
which was introduced in \cite{Gro87}.
Let $G$ be a countable group with a finite family of infinite subgroups
$\mathbb H$. Assume that $G$ is not virtually cyclic for simplicity. 
We call $(G,{\mathbb H})$ a properly relatively hyperbolic group if 
there exists a geometrically finite convergence group action $(\rho,X)$
such that $\mathbb H$ is a set of representatives of conjugacy classes
of maximal parabolic subgroups 
(refer to \cite[Definition 1]{Bow97}, \cite[Theorem 0.1]{Yam04} 
and \cite[Definition 3.1]{Hru10} for details). 
We remark that geometrically finite convergence group actions are 
minimal and non-elementary. 
Also we note that 
several definitions of relative hyperbolicity for a pair $(G,{\mathbb H})$
of a countable group and a family of subgroups 
are mutually equivalent 
if $\mathbb H$ is finite and each $H\in \mathbb H$ is infinite (refer to \cite{Hru10}). 

\begin{Rem}\label{rem}
If we consider a relatively hyperbolic group $(G,{\mathbb H})$
in the sense of Osin (\cite[Definition 2.35]{Osi06}), then 
$\mathbb H$ can be infinite
and each $H\in \mathbb H$ can be finite.
However even if $G$ is hyperbolic relative to an infinite family $\mathbb H$ 
in the sense of Osin, 
then $G$ can be realized as a free product of two infinite subgroups $A$ and $B$
(\cite[Theorem 2.44]{Osi06}), 
and thus $G$ is hyperbolic relative to $\{A, B\}$, which is a finite family of infinite subgroups.  
\end{Rem}

\begin{Rem}\label{rem'}
We do not know whether there exists a countable group $G$ such that 
it is not hyperbolic relative to any family of proper subgroups
in the sense of Osin, 
but it has a non-elementary convergence group action. 
\end{Rem}

We need two facts in order to prove Corollary \ref{kernel'}.
First one is claimed in \cite[Section 1]{Fre97}.
\begin{Prop}\label{ker}
Let $G$ be a countable group 
with a minimal non-elementary convergence group action $(\rho, X)$.
Then $\Ker \rho$ is the maximal finite normal subgroup of $G$.
\end{Prop}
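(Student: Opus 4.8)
The plan is to show that $\Ker\rho$ contains every finite normal subgroup of $G$; since the excerpt already records that $\Ker\rho$ is itself finite and normal, this identifies it as the maximal one. So let $N\trianglelefteq G$ be an arbitrary finite normal subgroup, and I will prove $N\subseteq\Ker\rho$, i.e.\ that $\rho(n)=\mathrm{id}_X$ for every $n\in N$.

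The central observation is that the common fixed-point set $\Fix(\rho(N)):=\bigcap_{n\in N}\Fix(\rho(n))$ is both closed and $G$-invariant. Closedness is clear. For invariance, if $x$ is fixed by all of $\rho(N)$ and $g\in G$, then for each $n\in N$ one has $\rho(n)\rho(g)(x)=\rho(g)\rho(g^{-1}ng)(x)=\rho(g)(x)$, because $g^{-1}ng\in N$ also fixes $x$; hence $\rho(g)(x)\in\Fix(\rho(N))$. Since the action is minimal, $\Fix(\rho(N))$ must be either empty or all of $X$, so it suffices to produce a single common fixed point. Note that this route bypasses any density statement about loxodromic fixed points, trading it for the invariance coming from normality of $N$.

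To produce a common fixed point I will use a loxodromic element $l\in G$, which exists because the action is non-elementary, with attracting fixed point $a$ and repelling fixed point $r$. Fixing $n\in N$, the elements $l^{-i}nl^{i}$ all lie in the finite set $N$, so some value $m\in N$ is attained along a subsequence $i_j\to\infty$; that is, $n l^{i_j}=l^{i_j}m$, whence $\rho(n)\rho(l)^{i_j}=\rho(l)^{i_j}\rho(m)$ as maps of $X$. Choosing any $x\in X$ with $x\neq r$ and $\rho(m)(x)\neq r$ (possible since $X$ is infinite and this excludes only two points), the convergence property gives $\rho(l)^{i_j}(x)\to a$ and $\rho(l)^{i_j}(\rho(m)(x))\to a$. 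Applying continuity of $\rho(n)$ to the former and comparing the two equal sequences yields $\rho(n)(a)=a$. As $n\in N$ was arbitrary, $a\in\Fix(\rho(N))$, so this set is nonempty.

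Combining the two paragraphs, minimality forces $\Fix(\rho(N))=X$, hence $N\subseteq\Ker\rho$, and $\Ker\rho$ is the maximal finite normal subgroup of $G$. The step I expect to require the most care is the convergence argument above: one must verify that $a$ is genuinely a common fixed point using only uniform convergence on compact subsets of $X\setminus\{r\}$ (so the single-point compacta $\{x\}$ and $\{\rho(m)(x)\}$ must be kept away from $r$), and one should make explicit the existence of a loxodromic element in a non-elementary convergence action, a standard fact which I would cite from \cite{Tuk94}.
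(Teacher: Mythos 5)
Your proof is correct, and it takes a genuinely different route from the paper. The paper works through auxiliary subgroups: for each loxodromic $l$ it considers $E^+_{\rho}(l)=\Stab_{\rho}(r)\cap\Stab_{\rho}(a)$, defines $E^+_{\rho}(G)$ as the intersection over all loxodromics, proves this is a finite normal subgroup (which requires two loxodromics with disjoint fixed point sets), shows every finite normal subgroup $M$ lies in $E^+_{\rho}(G)$ via a centralizer argument ($[G:C_G(M)]<\infty$ forces some power $l^n\in C_G(M)$, and $ml^nm^{-1}=l^n$ forces $\rho(m)$ to fix $r$ and $a$ by uniqueness of the fixed points of the conjugate loxodromic), and finally identifies $E^+_{\rho}(G)$ with $\Ker\rho$ because its elements fix the whole orbit $\rho(G)r$, which is dense by minimality. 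You instead prove containment of an arbitrary finite normal subgroup $N$ in $\Ker\rho$ directly: your pigeonhole on the conjugates $l^{-i}nl^{i}\in N$ replaces the paper's centralizer argument (note the two are close cousins --- your pigeonhole with two repeated indices would even recover exact commutation $nl^{i-j}=l^{i-j}n$, which is the paper's hypothesis in its Lemma 4.5), and your use of minimality is structurally different: rather than density of an orbit of fixed points, you apply minimality to the closed set $\Fix(\rho(N))$, whose $G$-invariance comes from normality of $N$. Your route is more economical --- it needs only one loxodromic element, one fixed point, and never has to prove finiteness of any auxiliary subgroup, since $\Ker\rho$ is already known to be finite. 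What the paper's longer route buys is the stronger conclusion of its Proposition A.1: the identification $\Ker\rho=E^+_{\rho}(G)=E_{\rho}(G)$, which parallels the description of maximal finite normal subgroups of relatively hyperbolic groups in the literature and is of independent use. Your two points of reliance on outside facts (existence of a loxodromic element in a non-elementary action, and the convergence dynamics of $\{l^i\}$) are exactly the ones the paper also imports from Tukia, so nothing is missing.
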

\noindent
We give its proof for readers in Appendix \ref{a}.\\
Second one is \cite[Proposition 3.1]{Fre97}. 
We give a proof for readers here.
\begin{Prop}\label{normal}
Let $G$ be a countable group 
with a convergence group action $(\rho, X)$. 
Let $N$ be an infinite normal subgroup of $G$. 
We consider the restricted convergence group action $(\rho|_{N},X)$. 
Then we have $\Lambda(\rho|_{N})=\Lambda(\rho)$.
In particular if $(\rho, X)$ is non-elementary, then
$(\rho |_N, X)$ is also non-elementary.
\end{Prop}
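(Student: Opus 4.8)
The plan is to establish the two inclusions $\Lambda(\rho|_N)\subseteq\Lambda(\rho)$ and $\Lambda(\rho|_N)\supseteq\Lambda(\rho)$ separately; the first is immediate, while the second rests on the normality of $N$ together with minimality of the induced action on the limit set. First I would note that, since $N$ is infinite, it contains infinite sequences of mutually distinct elements, so $(\rho|_N,X)$ really is a convergence group action and $\Lambda(\rho|_N)\neq\emptyset$. Every convergence sequence in $N$ is a convergence sequence in $G$ with the same repelling and attracting points, and since by \cite[Lemma 2M]{Tuk94} the limit set is exactly the set of all such points, this yields $\Lambda(\rho|_N)\subseteq\Lambda(\rho)$ at once. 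The set $\Lambda(\rho|_N)$ is moreover closed.

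The key step is to show that $\Lambda(\rho|_N)$ is $G$-invariant. For this I would fix $g\in G$ and a convergence sequence $\{n_i\}$ in $N$ with repelling point $r$ and attracting point $a$, and consider the conjugated sequence $\{g n_i g^{-1}\}$, which lies in $N$ precisely because $N$ is normal. Writing $\rho(g n_i g^{-1})=\rho(g)\rho(n_i)\rho(g)^{-1}$ and using that $\rho(g)$ is a homeomorphism, so that compact subsets of $X\setminus\{\rho(g)r\}$ correspond via $\rho(g)^{-1}$ to compact subsets of $X\setminus\{r\}$, I would verify that $\{g n_i g^{-1}\}$ is again a convergence sequence, now with repelling point $\rho(g)r$ and attracting point $\rho(g)a$. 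Hence $\rho(g)r,\rho(g)a\in\Lambda(\rho|_N)$; as $r,a$ range over all repelling and attracting points of sequences in $N$, this shows $\rho(g)\Lambda(\rho|_N)\subseteq\Lambda(\rho|_N)$ for every $g\in G$, i.e.\ that $\Lambda(\rho|_N)$ is $G$-invariant. I expect this conjugation argument --- specifically, transporting ``uniform convergence on compact subsets'' through $\rho(g)$ --- to be the main point requiring care.

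It then remains to upgrade the inclusion to an equality, which I would do according to the value of $\#\Lambda(\rho)$. If $\#\Lambda(\rho)=\infty$, the induced action of $G$ on $\Lambda(\rho)$ is minimal (as recalled in Section \ref{conv}), so the nonempty closed $G$-invariant subset $\Lambda(\rho|_N)$ of $\Lambda(\rho)$ must equal $\Lambda(\rho)$. If $\#\Lambda(\rho)=1$, the inclusion of a nonempty set into a one-point set is already an equality. If $\#\Lambda(\rho)=2$, then $G$ is virtually infinite cyclic and the infinite subgroup $N$ contains a loxodromic element whose two distinct fixed points are exactly $\Lambda(\rho)$ and lie in $\Lambda(\rho|_N)$, again forcing equality; the case $\#\Lambda(\rho)=0$ cannot occur, since $N$, and hence $G$, is infinite. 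Finally, the ``in particular'' assertion is immediate: if $\#\Lambda(\rho)=\infty$, then $\#\Lambda(\rho|_N)=\#\Lambda(\rho)=\infty$, so $(\rho|_N,X)$ is non-elementary as well.
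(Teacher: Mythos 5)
Your proposal is correct and follows essentially the same route as the paper: both handle the cases $\#\Lambda(\rho)=1,2$ directly and, in the non-elementary case, use normality to conjugate convergence sequences of $N$ and then invoke minimality of the induced action on $\Lambda(\rho)$ (the paper phrases this as $\Lambda(\rho)=\overline{\rho(G)a}\subset\Lambda(\rho|_N)$ for a single attracting point $a$, which is the same argument as your $G$-invariance plus minimality). The only cosmetic difference is that the paper assumes minimality of $(\rho,X)$ without loss of generality, while you work with the induced action on the limit set directly.
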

\begin{proof}
We remark that $\Lambda(\rho |_N)$ is not empty since $N$ is infinite.
Also obviously we have $\Lambda(\rho |_N)\subset \Lambda(\rho)$.  
Hence we have $\Lambda(\rho |_N)=\Lambda(\rho)$ if $\#\Lambda(\rho)=1$.

If $\#\Lambda(\rho)=2$, then $G$ is virtually infinite cyclic.
Then $N$ is also virtually infinite cyclic. 
When we take an element $n\in N$ of infinite order, 
$\rho(n)$ is loxodromic and fixes $\Lambda(\rho)$ 
(\cite[Lemma 2Q, Theorem 2G]{Tuk94}). 
Thus we have $\Lambda(\rho |_N)\supset \Lambda(\rho)$. 

Suppose that $(\rho,X)$ is non-elementary. 
Moreover we can assume that it is minimal without loss of generality. 
We take a convergence sequence $\{n_i\}_{i\in \N}$ of $(\rho |_N,X)$
with the attracting point $a\in X$ and 
the repelling point $r\in X$. 
Then for any $g\in G$, 
the infinite sequence
$\{gn_ig^{-1}\}_{i\in \N}$ is a convergence sequence of $(\rho,X)$ 
with the attracting point $\rho(g)a\in X$ and 
the repelling point $\rho(g)r\in X$. 
The infinite sequence
$\{gn_ig^{-1}\}_{i\in \N}$ can be regarded as a convergence sequence of 
$(\rho |_N,X)$ since $N$ is normal. 
In particular we have $\rho(G)a\subset\Lambda(\rho |_N)$.
Moreover the closure $\overline{\rho(G)a}$ of 
$\rho(G)a$ in $X$ is contained in $\Lambda(\rho |_N)$
since $\Lambda(\rho |_N)$ is closed.
Since $\Lambda(\rho)=\overline{\rho(G)a}$ (\cite[Theorem 2S]{Tuk94}),
we have $\Lambda(\rho |_N)\supset \Lambda(\rho)$.
\end{proof}

\section{Proofs of results}
In this section we prove Theorem \ref{P} and Corollaries \ref{simple} and \ref{kernel'}.

We show the following in order to prove Theorem \ref{P}:
\begin{Prop}\label{freept}
Let $G$ be a countable group 
with a minimal non-elementary convergence group action $(\rho,X)$.
Then 
$(\rho, X)$ is effective 
if and only if 
there exists 
a point $x\in X$ such that $\rho(g)x\neq x$ for any element $g\in G\setminus \{1\}$.
\end{Prop}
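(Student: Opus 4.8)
The plan is to prove the nontrivial implication --- that effectiveness produces a point with trivial stabilizer --- by a Baire category argument, reducing everything to the claim that no nontrivial homeomorphism in the image can fix a nonempty open subset of $X$. The reverse implication is immediate: if $x\in X$ satisfies $\rho(g)x\neq x$ for all $g\in G\setminus\{1\}$, then no such $g$ can lie in $\Ker\rho$ (an element of $\Ker\rho$ fixes every point), so $\Ker\rho=\{1\}$ and the action is effective.

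For the converse, I would first record that by minimality $X=\Lambda(\rho)$, since $\Lambda(\rho)$ is closed and $G$-invariant (\cite[Lemma 2M]{Tuk94}) and is nonempty because the action is non-elementary; hence $X$ is infinite and, being a minimal infinite system, has no isolated points. In particular $X$ is a nonempty compact metrizable, and therefore Baire, space in which no countable union of nowhere dense sets can be all of $X$. Writing $B=\bigcup_{g\in G\setminus\{1\}}\Fix(\rho(g))$ for the set of points with nontrivial stabilizer, this is a \emph{countable} union because $G$ is countable, and any $x\in X\setminus B$ is exactly the point we seek. Since each $\Fix(\rho(g))$ is closed, it suffices to show that for every $g$ with $\rho(g)\neq\mathrm{id}$ --- which, under effectiveness, means every $g\neq 1$ --- the set $\Fix(\rho(g))$ has empty interior. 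This is the crux.

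So suppose for contradiction that some $\rho(g)\neq\mathrm{id}$ fixes a nonempty open set $V$. Since $G$ is infinite it admits a convergence sequence, and using minimality to move its attracting point into $V$ (conjugating the sequence exactly as in the proof of Proposition \ref{normal}), I obtain a convergence sequence $\{f_i\}$ with attracting point $a\in V$ and some repelling point $r$. The conjugates $\psi_i:=\rho(f_i^{-1}gf_i)=\rho(f_i)^{-1}\rho(g)\rho(f_i)$ then fix $\rho(f_i)^{-1}(V)$ pointwise; and since $X\setminus V$ is a compact subset of $X\setminus\{a\}$ while $\rho(f_i^{-1})|_{X\setminus\{a\}}$ converges to $r$, the images $\rho(f_i)^{-1}(X\setminus V)=\rho(f_i^{-1})(X\setminus V)$ shrink into every neighborhood of $r$, so their complements $\rho(f_i)^{-1}(V)$ eventually contain any prescribed compact subset of $X\setminus\{r\}$. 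Now I split into two cases. If $\{f_i^{-1}gf_i\}$ takes only finitely many values in $G$, some value $w$ occurs for infinitely many $i$, and then $\rho(w)$ fixes every point of $X\setminus\{r\}$; by density of $X\setminus\{r\}$ this forces $\rho(w)=\mathrm{id}$, hence $\rho(g)=\mathrm{id}$, a contradiction. Otherwise $\{f_i^{-1}gf_i\}$ has a subsequence of mutually distinct elements, which is a convergence sequence with some attracting point $a'$ and repelling point $r'$; choosing two distinct points $x_1,x_2\in X\setminus\{r,r'\}$ (possible as $X$ is infinite), each $x_k$ is eventually fixed by $\psi_i$ yet must converge to $a'$, forcing $x_1=a'=x_2$, again a contradiction.

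I expect the main obstacle to be precisely this crux, and it is genuinely the torsion case that requires the dynamical argument: for an element of infinite order the fixed set has at most two points directly (a convergence subsequence of its powers forces this), whereas a finite-order homeomorphism could a priori fix a large set, and it is the conjugation-plus-convergence dichotomy above that rules this out uniformly. The remaining work is bookkeeping --- verifying that conjugation transports the convergence sequence and its endpoints as claimed (as in Proposition \ref{normal}), and that minimality supplies enough density so that the Baire step closes.
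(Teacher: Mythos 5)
Your proposal is correct, and its outer skeleton is the same as the paper's: both settle the nontrivial direction by applying the Baire category theorem to the countable union $\bigcup_{g\in G\setminus\{1\}}\Fix(\rho(g))$ of closed sets, thereby reducing everything to the claim that $\Fix(\rho(g))$ has empty interior whenever $\rho(g)\neq\mathrm{id}$ (the paper's Lemma~\ref{noint}). Where you genuinely diverge is in the proof of that crux. The paper invokes two substantial results of Tukia \cite{Tuk94}: Theorem~2R, which supplies a loxodromic element with one fixed point in $U:=X\setminus\Fix(\rho(g))$ and the other in the putative interior $V$ of $\Fix(\rho(g))$, and Theorem~2G, that fixed point sets of loxodromic elements coincide or are disjoint; conjugating that loxodromic by $g$ then gives a three-line contradiction. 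You instead argue bare-handed from the definition of a convergence action: you conjugate $g$ by a convergence sequence whose attracting point is moved into $V$ by minimality, observe that the conjugates fix ever larger compact subsets of $X\setminus\{r\}$, and rule out both branches of the dichotomy ``finitely many conjugate values / infinitely many distinct conjugates,'' using in the first branch that $X$ is perfect (so $X\setminus\{r\}$ is dense) and in the second the convergence dynamics of the conjugate sequence itself. Your route is longer but more self-contained: it needs only the definition, minimality, and perfectness of the limit set, and never uses the existence of loxodromic elements, which is the deepest input in the paper's version. Two minor points of rigor, neither damaging: in your second case a sequence of mutually distinct conjugates is not automatically a convergence sequence, so you must pass to a further subsequence (harmless, since the eventual-fixing property holds along the whole sequence); and you should record that $G$ is infinite --- which follows since the limit set is infinite --- before asserting that a convergence sequence exists at all.
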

\noindent
This claims that effectiveness of a minimal non-elementary convergence group action  
can be detected by some single point.

The following is a key lemma for Proof of Proposition \ref{freept}.
\begin{Lem}\label{noint}
Let $G$ be a countable group 
with a minimal non-elementary convergence group action $(\rho,X)$.
Then for any element $g\in G$ such that $\rho(g)$ is not the identity map, 
the fixed point set $\Fix(\rho(g))\subset X$ has an empty interior. 
\end{Lem}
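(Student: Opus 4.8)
The plan is to argue by contradiction. Suppose that $F:=\Fix(\rho(g))$ has nonempty interior $U$ while $\rho(g)$ is not the identity. Since $(\rho,X)$ is non-elementary, $G$ contains a loxodromic element (see \cite{Tuk94}), and by minimality the $G$-orbit of its attracting fixed point is dense in $X=\Lambda(\rho)$; conjugating by a suitable $\gamma\in G$ therefore produces a loxodromic element $h$ whose attracting fixed point $a$ lies in $U$. Write $r$ for its repelling fixed point and set $g_n:=h^{-n}gh^n$. The basic observation I would record is that for every compact set $K\subseteq X\setminus\{r\}$ one has $K\subseteq \Fix(\rho(g_n))$ for all large $n$: indeed $\{h^n\}$ is a convergence sequence with repelling point $r$, so $\rho(h^n)|_K$ converges to $a$ uniformly, hence $\rho(h^n)(K)\subseteq U\subseteq F$ eventually, and $\Fix(\rho(g_n))=\rho(h^{-n})(F)\supseteq \rho(h^{-n})\rho(h^n)(K)=K$.

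I would then split into two cases according to whether the set $\{g_n\}_{n\in\N}$ is infinite or finite. If it is infinite, pass to a subsequence of pairwise distinct elements; by the convergence property it has a convergence subsequence $\{g_{n_j}\}$ with some repelling point $r'$ and attracting point $a'$, so $\rho(g_{n_j})|_{X\setminus\{r'\}}$ converges to $a'$ uniformly on compact sets. Choose any compact $K\subseteq X\setminus\{r,r'\}$ containing two distinct points, which is possible since $X$ is infinite. For large $j$ the observation above gives $\rho(g_{n_j})|_K=\mathrm{id}_K$, while the convergence gives $\rho(g_{n_j})(x)\to a'$ for every $x\in K$; since $\rho(g_{n_j})(x)=x$ this forces every point of $K$ to equal $a'$, contradicting $\#K\ge 2$.

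It remains to treat the case where $\{g_n\}$ is finite, which I regard as the delicate point. Finiteness yields $n>m$ with $g_n=g_m$, i.e.\ $g$ commutes with $\ell:=h^{k}$ for $k:=n-m\ge 1$; consequently $g_{mk}=g$ for every $m$. Fixing any compact $K\subseteq X\setminus\{r\}$ and applying the observation along the powers $n=mk$ (note $\ell$ is loxodromic with the same attracting and repelling points $a,r$), I obtain $K\subseteq \Fix(\rho(g_{mk}))=\Fix(\rho(g))=F$ for large $m$. As $K$ was an arbitrary compact subset of $X\setminus\{r\}$ and such sets exhaust $X\setminus\{r\}$, this shows $X\setminus\{r\}\subseteq F$; since $F$ is closed we conclude $F=X$, i.e.\ $\rho(g)=\mathrm{id}$, contrary to our assumption. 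Thus no $g$ with $\rho(g)\neq\mathrm{id}$ can have a fixed-point set with nonempty interior, proving the lemma. The one point requiring care throughout is that all the limiting statements are uniform on the relevant compact sets, which is exactly what the definition of a convergence sequence supplies; the commuting subcase is where a single conjugate could fail to spread $U$ around, and it is resolved instead by using the expanding dynamics of $h^{-n}$ to exhaust $X\setminus\{r\}$.
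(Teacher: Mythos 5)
Your proof is correct, but it takes a genuinely different route from the paper's. The paper also argues by contradiction from a nonempty interior $V\subseteq\Fix(\rho(g))$, but then finishes in a few lines by quoting two stronger facts from \cite{Tuk94}: Theorem 2R (given the disjoint nonempty open sets $U=X\setminus\Fix(\rho(g))$ and $V$, there is a loxodromic $l$ with one fixed point $r\in U$ and the other $a\in V$) and Theorem 2G (the fixed point sets of two loxodromic elements either coincide or are disjoint); conjugating $l$ by $g$ produces a loxodromic with fixed point set $\{\rho(g)r,a\}$, which intersects $\{r,a\}$ without coinciding with it, a contradiction. You go the opposite way: instead of conjugating the loxodromic by $g$, you conjugate $g$ by powers of a loxodromic $h$ whose attracting point lies in the interior, and exploit north--south dynamics together with the defining convergence property of the sequence $\{h^{-n}gh^n\}$, splitting into the cases where this set of conjugates is infinite (extract a convergence subsequence and compare its attracting point with the pointwise-fixed compact set) or finite (use the resulting commutation to spread the fixed set over all of $X\setminus\{r\}$). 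What your approach buys is self-containedness: it needs only the existence of a single loxodromic element and minimality (dense orbits), not Theorems 2R and 2G; the cost is length and the case analysis. One small point to patch in your final step: from $X\setminus\{r\}\subseteq F$ and $F$ closed you conclude $F=X$, which requires $r$ to be non-isolated in $X$; this does hold (the limit set of a non-elementary action is perfect), but it is even simpler to note that $\rho(g)$ is a bijection fixing every point of $X\setminus\{r\}$, hence it must fix $r$ as well.
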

\begin{proof}
We take an element $g\in G$ such that $\rho(g)$ is not the identity map.
We put $U:=X\setminus\Fix(\rho(g))$, which is clearly open and not empty.  
We assume that the interior $V$ of $\Fix(\rho(g))$ is not empty.
Since $U$ and $V$ are mutually disjoint open sets which are not empty, 
we have an element $l\in G$ such that $\rho(l)$ is loxodromic, 
a fixed point $r$ of $\rho(l)$ is in $U$
and the other fixed point $a$ of $\rho(l)$ is in $V$
(\cite[Theorem 2R]{Tuk94}). 
Then $\rho(glg^{-1})$ is loxodromic and 
satisfies $\Fix(\rho(glg^{-1}))=\{\rho(g)r,\rho(g)a\}$. 
Since $r\in U$ and $a\in V$, we have $\rho(g)r\neq r$ and $\rho(g)a=a$.
This contradicts the fact
that the fixed point sets of two loxodromic elements
either coincide or have an empty intersection
(\cite[Theorem 2G]{Tuk94}). 
\end{proof}

\begin{proof}[Proof of Proposition \ref{freept}]
The `if' part is trivial. We prove the `only if' part.
Suppose that $(\rho, X)$ is effective. 
Then for each $g\in G\setminus \{1\}$, 
$\rho(g)$ is not the identity map.
Hence the interior $\Int(\Fix(\rho(g)))$ is empty by Lemma \ref{noint}.
Then $\Int(\bigcup_{g\in G\setminus \{1\}}\Fix(\rho(g)))$ is also empty since 
$X$ is a Baire space. 
Thus $\bigcup_{g\in G\setminus \{1\}}\Fix(\rho(g))$ must be a proper subset of $X$
because $X$ is not empty.
\end{proof}

\begin{Rem}
Since Lemma \ref{noint} claims that 
an effective minimal non-elementary convergence group action is 
`slender' (see the paragraph just before \cite[Corollary 10]{H-P}), 
the argument of the proof of Proposition \ref{freept} is parallel to 
an observation in the proof of \cite[Corollary 10]{H-P}.
\end{Rem}

\begin{proof}[Proof of Theorem \ref{P}]
Suppose that 
$G$ has an effective minimal non-elementary convergence group action $(\rho,X)$. 
There exist two elements $l_1,l_2\in G$ such that $\rho(l_1)$ and $\rho(l_2)$ are loxodromic
and have no common fixed points (\cite[Theorem 2T]{Tuk94}). 
Since $\{l_1^i\}_{i\in \N}$ and $\{l_2^i\}_{i\in \N}$ are convergence sequences, 
then $\rho(l_1)$ and $\rho(l_2)$ are hyperbolic in the sense of \cite[Definition 10]{Har}. 
Therefore $(\rho,X)$ is strongly hyperbolic in the sense of \cite[Definition 10]{Har}. 
Also for any finite subset $F\subset G\setminus\{1\}$, 
there exists $x\in X$ such that $\rho(f)x\neq x$ for all $f\in F$ 
by Proposition \ref{freept}.
Therefore $(\rho,X)$ is strongly faithful in the sense of \cite[Definition 10]{Har}. 
Hence $G$ is a Powers group by \cite[Proposition 11 and the following remark]{Har}.
\end{proof}

\begin{proof}[Proof of Corollary \ref{simple}]
Clearly (viii) implies (i). Also Theorem \ref{P} claims that 
(i) implies (ii).

For general countable groups, 
the following relations hold among properties (ii), (iii), (iv), (v), (vi), (vii) and (viii): 
It is well-known that 
(ii) implies both (iii) and (iv)
(see \cite[Theorem 13 and Remark (i) on it]{Har});
each of (iii) and (iv) implies both (v) and (vi) 
(see \cite[Proposition 3]{B-H}); 
each of (v) and (vi) obviously implies (vii); 
(vii) implies (viii) by definition of convergence group actions
(see Section \ref{conv}).
\end{proof}

We have the following: 
\begin{Cor}\label{Powers'}
Let $G$ be a countable group 
with an effective minimal non-elementary convergence group action $(\rho,X)$. 
Then for every non-trivial subnormal subgroup $N$ of $G$, the restricted action 
$(\rho |_N,X)$ is an effective minimal non-elementary convergence group action. 
In particular $G$ is a strongly Powers group.
\end{Cor}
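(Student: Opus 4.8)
The plan is to reduce the whole statement to Theorem \ref{P}. Once I show that for every non-trivial subnormal subgroup $N\leq G$ the restricted action $(\rho|_N,X)$ is again effective, minimal, and non-elementary, Theorem \ref{P} immediately gives that each such $N$ is a Powers group, and hence that $G$ is a strongly Powers group by definition. So the entire task is to prove that effectiveness, minimality, and non-elementariness survive restriction to a subnormal subgroup.

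Effectiveness is immediate: for any subgroup $H\leq G$ one has $\Ker(\rho|_H)=H\cap\Ker\rho=\{1\}$, so $(\rho|_H,X)$ is effective. For an infinite \emph{normal} subgroup, minimality and non-elementariness are exactly Proposition \ref{normal}, which yields $\Lambda(\rho|_N)=\Lambda(\rho)$; together with minimality ($\Lambda(\rho)=X$) this forces $\Lambda(\rho|_N)=X$, so $(\rho|_N,X)$ is again minimal and, since $X$ is infinite, non-elementary. To handle a subnormal subgroup, which sits inside $G$ only through a finite chain $N=N_0\triangleleft N_1\triangleleft\cdots\triangleleft N_k=G$, I would argue by downward induction along this chain, with base case $N_k=G$ given by hypothesis.

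In the inductive step I assume $(\rho|_{N_{i+1}},X)$ is effective, minimal, and non-elementary and pass to $N_i\triangleleft N_{i+1}$, which is non-trivial since it contains $N_0=N\neq\{1\}$. Before applying Proposition \ref{normal} to the group $N_{i+1}$ I must know that $N_i$ is infinite, and this is exactly where Proposition \ref{ker} is used: applied to $N_{i+1}$ with its minimal non-elementary action, it identifies $\Ker(\rho|_{N_{i+1}})=\{1\}$ as the maximal finite normal subgroup of $N_{i+1}$, so $N_{i+1}$ has no non-trivial finite normal subgroup and the non-trivial $N_i$ must be infinite. Proposition \ref{normal} then gives $\Lambda(\rho|_{N_i})=\Lambda(\rho|_{N_{i+1}})=X$, so $(\rho|_{N_i},X)$ is effective, minimal, and non-elementary, closing the induction and proving the claim for $N=N_0$.

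The main obstacle is precisely this passage from the normal to the subnormal case: one cannot apply Proposition \ref{normal} directly to $N$ and $G$, because $N$ need not be normal in $G$, and each step of the chain requires re-verifying that the next subgroup down is infinite. Proposition \ref{ker} is the key that makes the induction work, since it converts effectiveness of the restricted action into the absence of non-trivial finite normal subgroups at every level of the chain.
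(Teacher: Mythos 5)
Your proposal is correct and follows essentially the same route as the paper: descend along the subnormal chain, using Proposition \ref{ker} at each level (applied to the effective minimal non-elementary restricted action) to conclude the next subgroup down is infinite, then Proposition \ref{normal} to see that the restricted action is again effective, minimal and non-elementary, and finally Theorem \ref{P}. Your write-up merely makes explicit the inductive bookkeeping that the paper compresses into ``by induction on $j$.''
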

\begin{proof}
We take a non-trivial subnormal subgroup $N$ of $G$. 
There exists a finite chain of subgroups $N=N_k<N_{k-1}<\cdots<N_0=G$
such that $N_j$ is normal in $N_{j-1}$ for any $j=1,\ldots,k$.  
We suppose that $G$ has an effective minimal non-elementary convergence group action 
$(\rho, X)$. 
Then $N_1$ is infinite by Proposition \ref{ker}
and thus the restricted action $(\rho|_{N_1},X)$ is 
an effective minimal non-elementary convergence group action 
by Proposition \ref{normal}. 
By induction on $j$, the restricted action $(\rho|_N,X)$ is 
an effective minimal non-elementary convergence group action.
Hence $N$ is a Powers group by Theorem \ref{P}.
\end{proof}

\begin{proof}[Proof of Corollary \ref{kernel'}]
Suppose that 
$G$ has a minimal non-elementary convergence group action $(\rho,X)$. 
$\Ker \rho$ is the maximal finite normal subgroup of $G$ 
by Proposition \ref{ker}. 
Now we prove that the amenable radical $R_a(G)$ is finite. 
We assume that $G$ has an infinite amenable normal subgroup $N$. 
Since $N$ does not contain any non-abelian free subgroups, 
the restricted convergence group action $\rho|_{N}$ 
is elementary (\cite[Theorem 2U]{Tuk94}).
This contradicts Proposition \ref{normal}. 

Since $\Ker \rho$ is equal to $R_a(G)$, 
the quotient $G/R_a(G)$ has 
the induced effective minimal non-elementary convergence group action $(\bar{\rho},X)$.
Hence Corollary \ref{Powers'} can be applied to $G/R_a(G)$.
\end{proof}

\appendix
\section{Proof of Proposition \ref{ker}}\label{a}
In this appendix we prove Proposition \ref{ker}.
In fact we prove Proposition \ref{kernel} 
(compare with \cite[Lemma 3.3]{A-M-O} for the case of relatively hyperbolic groups).

Let $G$ be a countable group 
with a convergence group action $(\rho,X)$
and $l$ be an element of $G$ such that $\rho(l)$ is loxodromic.
We put $E^+_{\rho}(l):=\Stab_{\rho}(r)\cap \Stab_{\rho}(a)$
and $E_{\rho}(l):=\Stab_{\rho}(\{r,a\})$, where 
$\Stab_{\rho}(r)$, $\Stab_{\rho}(a)$
and $\Stab_{\rho}(\{r,a\})$ are 
the stabilizer of subsets $\{r\}$, $\{a\}$ and $\{r,a\}$ in $G$, respectively.
Also we define $E_{\rho}^+(G)$ (resp. $E_{\rho}(G)$) 
by the intersection of all the sets in the family $\{ E^+_{\rho}(l) ~|~ l
\in G, \rho(l)\text{ is loxodromic}\}$
(resp.  $\{ E_{\rho}(l) ~|~ l \in G, \rho(l)\text{ is loxodromic}\}$). 

\begin{Prop}\label{kernel}
Let $G$ be a countable group 
with a non-elementary convergence group action.
If $(\rho, X)$ is a minimal non-elementary convergence group action, then 
$\Ker \rho$ is the maximal finite normal subgroup of $G$
and equal to both $E_{\rho}^+(G)$ and $E_{\rho}(G)$.
\end{Prop}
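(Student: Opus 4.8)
The plan is to prove the chain of inclusions $\Ker\rho \subseteq E_\rho^+(G) \subseteq E_\rho(G) \subseteq \Ker\rho$, so that all three subgroups coincide, and then to check that this common subgroup is the maximal finite normal subgroup. The two leftmost inclusions are immediate: any element of $\Ker\rho$ fixes every point of $X$, hence lies in $\Stab_\rho(r)\cap\Stab_\rho(a)$ for every loxodromic pair, giving $\Ker\rho\subseteq E_\rho^+(G)$; and fixing both $r$ and $a$ certainly preserves $\{r,a\}$, giving $E_\rho^+(G)\subseteq E_\rho(G)$. Finiteness of $\Ker\rho$ is already recorded in Section \ref{conv}, and its normality is automatic. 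Moreover, since conjugation by $h\in G$ sends a loxodromic $l$ to the loxodromic $hlh^{-1}$ and its fixed pair to $\rho(h)\{r,a\}$, one checks $hE_\rho(G)h^{-1}=E_\rho(G)$, so $E_\rho(G)$ is normal as well. The whole statement thus reduces to the single inclusion $E_\rho(G)\subseteq\Ker\rho$ together with the maximality claim.

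The heart of the argument is to show that every $g\in E_\rho(G)$ acts as the identity. Since $(\rho,X)$ is minimal, the nonempty closed invariant set $\Lambda(\rho)$ equals $X$, and the fixed points of loxodromic elements are dense in $X$ (\cite[Theorem 2R]{Tuk94}); by continuity it therefore suffices to prove that $\rho(g)$ fixes both fixed points of an arbitrary loxodromic $l$, with repelling point $r$ and attracting point $a$. Because $\Lambda(\rho)$ is infinite and loxodromic fixed pairs are pairwise equal or disjoint (\cite[Theorem 2G]{Tuk94}), there is a loxodromic $l'$ whose fixed pair $\{r',a'\}$ is disjoint from $\{r,a\}$. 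For each $n$ the conjugate $l^n l' l^{-n}$ is loxodromic with fixed pair $\rho(l^n)\{r',a'\}=:\{r_n,a_n\}$, and since $r',a'\neq r$ we have $r_n\to a$ and $a_n\to a$. As $g\in E_\rho(G)$, $\rho(g)$ preserves each pair $\{r_n,a_n\}$, so $\rho(g)r_n\in\{r_n,a_n\}$ and hence $\rho(g)r_n\to a$; on the other hand $\rho(g)r_n\to\rho(g)a$ by continuity. Thus $\rho(g)a=a$, and since $\rho(g)$ preserves the set $\{r,a\}$ it must also fix $r$. Hence $\rho(g)$ fixes every loxodromic fixed point, so $\rho(g)=\mathrm{id}$ and $g\in\Ker\rho$. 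This establishes $E_\rho^+(G)=E_\rho(G)=\Ker\rho$.

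It remains to prove maximality. Let $F$ be any finite normal subgroup of $G$ and $f\in F$. For a loxodromic $l$, normality gives $l^k f l^{-k}\in F$ for all $k$, so finiteness of $F$ forces $l^i f l^{-i}=l^j f l^{-j}$ for some $i<j$; thus $f$ commutes with the loxodromic element $l^{j-i}$, whose fixed set is again $\{r,a\}$. Commuting with $\rho(l^{j-i})$ forces $\rho(f)$ to preserve $\Fix(\rho(l^{j-i}))=\{r,a\}$, so $f\in E_\rho(l)$. As $l$ was an arbitrary loxodromic, $f\in E_\rho(G)=\Ker\rho$, whence $F\subseteq\Ker\rho$; together with the finiteness and normality of $\Ker\rho$ this gives the maximality. (As an alternative route to finiteness, if $E_\rho(G)$ were infinite then by Proposition \ref{normal} its restricted action would be non-elementary and hence contain a loxodromic, i.e. an infinite-order element, contradicting $E_\rho(G)=\Ker\rho$.)

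I expect the crux inclusion $E_\rho(G)\subseteq\Ker\rho$ to be the main obstacle. A priori $\rho(g)$ could \emph{swap} the two fixed points of some loxodromic element, and since $X$ may be totally disconnected (as for boundaries of free groups) no connectedness argument is available to exclude this globally. The accumulation trick in the second paragraph is precisely what rules it out: the translates $\rho(l^n)\{r',a'\}$ of a disjoint pair collapse to the single point $a$ while remaining $\rho(g)$-invariant, forcing $\rho(g)a=a$ directly rather than merely constraining a permutation of $\{r,a\}$.
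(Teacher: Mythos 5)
Your proof is correct, and it takes a genuinely different route from the paper's. The paper never proves the inclusion $E_{\rho}(G)\subseteq\Ker\rho$ dynamically: it first shows that $E_{\rho}^+(G)$ and $E_{\rho}(G)$ are finite normal subgroups (Lemmas \ref{23} and \ref{23'}, resting on the fact that stabilizers of loxodromic fixed-point pairs are virtually infinite cyclic, \cite[Theorem 2I]{Tuk94}), then that every finite normal subgroup is contained in $E_{\rho}^+(G)$ (Lemmas \ref{4'} and \ref{4}, via the finite-index centralizer $C_G(M)$); this makes $E_{\rho}^+(G)$ the maximal finite normal subgroup, gives $E_{\rho}(G)=E_{\rho}^+(G)$ by maximality rather than by dynamics, and yields the final inclusion $E_{\rho}^+(G)\subseteq\Ker\rho$ from the observation that any $g\in E_{\rho}^+(G)$ fixes the whole orbit $\rho(G)r$ of a repelling fixed point, which is dense by minimality and \cite[Theorem 2S]{Tuk94}. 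You invert the logic: you prove the strongest inclusion $E_{\rho}(G)\subseteq\Ker\rho$ head-on, and your accumulation trick (the translates $\rho(l^n)\{r',a'\}$ of a disjoint loxodromic pair collapse to $a$ while remaining $\rho(g)$-invariant) does double duty, forcing $\rho(g)a=a$ and thereby excluding the swap possibility that the paper's indirect route never has to confront. As a result Lemmas \ref{23} and \ref{23'} become unnecessary in your argument: finiteness of the common subgroup is inherited from $\Ker\rho$, which is finite because it admits no convergence sequence. Your maximality step is close in spirit to the paper's (pigeonhole on the conjugates $l^kfl^{-k}$ in place of the finite-index centralizer; both yield commutation of $f$ with a power of $l$), except that the paper's Lemma \ref{4'} upgrades this commutation to fixing $r$ and $a$ separately, using uniqueness of attracting and repelling points, whereas you extract only preservation of the pair $\{r,a\}$ --- which suffices precisely because your crux equality concerns $E_{\rho}(G)$ rather than $E_{\rho}^+(G)$. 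The trade-off: the paper's modular lemmas are statements about arbitrary non-elementary actions (minimality enters only at the very end), while your argument is shorter, purely dynamical, and avoids \cite[Theorem 2I]{Tuk94} and \cite[Theorem 2S]{Tuk94} altogether, needing only density of loxodromic fixed points (\cite[Theorem 2R]{Tuk94}) and basic convergence dynamics. One small point: the sentence producing $l'$ cites only infiniteness of $\Lambda(\rho)$ and \cite[Theorem 2G]{Tuk94}; strictly you also need the density of loxodromic fixed points invoked just before it (or \cite[Theorem 2T]{Tuk94}) to know that some loxodromic fixed point lies outside $\{r,a\}$ at all.
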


In order to prove the above, we need some lemmas.
\begin{Lem}\label{23}
Let $G$ be a countable group 
with a non-elementary convergence group action $(\rho,X)$.
Then $E_{\rho}^+(G)$ is a finite normal subgroup of $G$.
\end{Lem}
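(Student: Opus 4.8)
The plan is to verify the three assertions bundled in the statement — that $E_{\rho}^+(G)$ is a subgroup, that it is normal, and that it is finite — in increasing order of difficulty, with finiteness being the real content. Throughout I would use that for a loxodromic $\rho(l)$ with fixed points $r,a$ and any $h\in G$, the element $h^{-1}lh$ is again loxodromic, with $\rho(h^{-1}lh)=\rho(h)^{-1}\rho(l)\rho(h)$ having fixed points $\rho(h)^{-1}r$ and $\rho(h)^{-1}a$.

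Being a subgroup is immediate: each $E_{\rho}^+(l)=\Stab_{\rho}(r)\cap\Stab_{\rho}(a)$ is an intersection of point-stabilizers, hence a subgroup, and $E_{\rho}^+(G)$ is by definition the intersection of all such subgroups. For normality I would fix $g\in E_{\rho}^+(G)$ and $h\in G$ and check that $hgh^{-1}$ lies in every $E_{\rho}^+(l)$. Given a loxodromic $\rho(l)$ with fixed points $r,a$, the points $\rho(h)^{-1}r$ and $\rho(h)^{-1}a$ are the fixed points of the loxodromic element $h^{-1}lh$, so $g$ fixes them; the one-line computation $\rho(hgh^{-1})r=\rho(h)\rho(g)\rho(h)^{-1}r=\rho(h)\rho(h)^{-1}r=r$, and similarly for $a$, then shows $hgh^{-1}\in E_{\rho}^+(l)$. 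As $l$ is arbitrary, $hgh^{-1}\in E_{\rho}^+(G)$.

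The main obstacle is finiteness, which I would establish via the convergence property together with the fact that $E_{\rho}^+(G)$ fixes an infinite subset of $\Lambda(\rho)$. Fix a loxodromic $\rho(l)$ with attracting point $a$; since for every $h\in G$ the conjugate $hlh^{-1}$ is loxodromic with $\rho(h)a$ among its fixed points, every element of $E_{\rho}^+(G)$ fixes the whole orbit $\rho(G)a$ pointwise. By \cite[Theorem 2S]{Tuk94} we have $\overline{\rho(G)a}=\Lambda(\rho)$, and since the action is non-elementary ($\#\Lambda(\rho)=\infty$) the orbit $\rho(G)a$ is infinite. Now if $E_{\rho}^+(G)$ were infinite, it would contain an infinite sequence of pairwise distinct elements, which after passing to a subsequence is a convergence sequence $\{g_{i_j}\}$ with some repelling point $r$ and attracting point $a'$. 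Choosing a point $p\in\rho(G)a$ with $p\neq r$ and $p\neq a'$ — possible because $\rho(G)a$ is infinite — the uniform convergence on the compact set $\{p\}\subset X\setminus\{r\}$ gives $\rho(g_{i_j})p\to a'$, whereas $\rho(g_{i_j})p=p$ for every $j$ since each $g_{i_j}$ fixes $p$. This forces $p=a'$, a contradiction, so $E_{\rho}^+(G)$ admits no infinite sequence of distinct elements and is therefore finite. The only delicate point is securing a fixed point $p$ of all of $E_{\rho}^+(G)$ that is distinct from the two exceptional points $r,a'$, which is exactly where non-elementariness enters, through the infinitude of the orbit $\rho(G)a$.
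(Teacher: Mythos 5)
Your proof is correct; the subgroup and normality parts coincide with the paper's (the paper phrases normality as the setwise identity $E_{\rho}^+(hlh^{-1})=hE_{\rho}^+(l)h^{-1}$, which is exactly your elementwise computation), but your finiteness argument takes a genuinely different route. The paper picks two loxodromic elements $l_1,l_2$ whose fixed-point sets are disjoint (\cite[Theorem 2R]{Tuk94}) and observes that $E_{\rho}^+(G)\subset E_{\rho}^+(l_1)\cap E_{\rho}^+(l_2)$, which is finite because each $E_{\rho}^+(l_i)$ is virtually infinite cyclic (\cite[Theorem 2I]{Tuk94}) and the two share no element of infinite order (\cite[Theorem 2G]{Tuk94}); this is a purely algebraic argument resting on the structure theory of two-point stabilizers. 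You instead argue dynamically: $E_{\rho}^+(G)$ fixes pointwise the orbit $\rho(G)a$ of an attracting fixed point, this orbit is infinite because its closure is $\Lambda(\rho)$ (\cite[Theorem 2S]{Tuk94}), and a convergence sequence can have common fixed points only among its repelling and attracting points $\{r,a'\}$, so $E_{\rho}^+(G)$ admits no infinite sequence of distinct elements. Your route needs only one loxodromic element plus the bare definition of a convergence action --- it is the same mechanism by which the paper notes in Section \ref{conv} that $\Ker\rho$ is finite --- whereas the paper needs two independent loxodromics and the structural Theorems 2G and 2I; on the other hand, you invoke Theorem 2S (minimality of the limit set), which the paper's proof of this lemma avoids. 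A bonus of your approach is that it essentially contains the appendix's Proposition \ref{kernel}: under the additional hypothesis of minimality, your argument shows $E_{\rho}^+(G)$ fixes a dense subset of $X$ and hence lies in $\Ker\rho$. One small point you should make explicit: the existence of a loxodromic element (so that your chosen $l$ exists and the defining intersection is non-vacuous) is itself a consequence of non-elementariness, guaranteed for instance by the same \cite[Theorem 2R]{Tuk94} that the paper cites.
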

\begin{proof} 
For any element $l\in G$ such that $\rho(l)$ is loxodromic and for any element $h\in G$, 
$\rho(hlh^{-1})$ is loxodromic.
Clearly we have $E_{\rho}^+(hlh^{-1})=hE_{\rho}^+(l)h^{-1}$. 

We can take two elements $l_1,l_2\in G$ such that 
$\rho(l_1)$ and $\rho(l_2)$ are loxodromic and have no common fixed points 
(\cite[Theorem 2R]{Tuk94}). 
Then $E_{\rho}^+(l_1)\cap E_{\rho}^+(l_2)$ is finite because 
$E_{\rho}^+(l_1)$ and $E_{\rho}^+(l_2)$ have no common elements of infinite order (\cite[Theorem 2G]{Tuk94})
and they are virtually infinite cyclic by \cite[Theorem 2I]{Tuk94}. 
\end{proof}

We can prove the following in the same way as the proof of Lemma \ref{23}.
\begin{Lem}\label{23'}
Let $G$ be a countable group 
with a non-elementary convergence group action $(\rho,X)$.
Then $E_{\rho}(G)$ is a finite normal subgroup of $G$.
\end{Lem}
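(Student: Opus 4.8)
The plan is to follow the proof of Lemma~\ref{23} essentially verbatim, since the only structural difference between $E_{\rho}(l)=\Stab_{\rho}(\{r,a\})$ and $E_{\rho}^{+}(l)=\Stab_{\rho}(r)\cap\Stab_{\rho}(a)$ is that an element of $E_{\rho}(l)$ is permitted to interchange the two fixed points $r$ and $a$, rather than being required to fix each of them. I would split the argument into a normality step and a finiteness step, exactly as before.

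For normality, I would first record the conjugation identity. For any $l\in G$ with $\rho(l)$ loxodromic and any $h\in G$, the element $\rho(hlh^{-1})$ is again loxodromic, and its fixed-point pair is $\{\rho(h)r,\rho(h)a\}$. Since $\rho(h)$ carries $\{r,a\}$ onto this pair, the stabilizer of the new pair is the conjugate of the old one, so $E_{\rho}(hlh^{-1})=hE_{\rho}(l)h^{-1}$. As conjugation by $h$ permutes the set of loxodromic elements, it permutes the family $\{E_{\rho}(l)\mid l\in G,\ \rho(l)\text{ is loxodromic}\}$, and hence preserves its intersection $E_{\rho}(G)$. Thus $hE_{\rho}(G)h^{-1}=E_{\rho}(G)$ for every $h\in G$, so $E_{\rho}(G)$ is normal.

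For finiteness, I would use \cite[Theorem 2R]{Tuk94} to choose $l_1,l_2\in G$ with $\rho(l_1),\rho(l_2)$ loxodromic and with disjoint fixed-point sets $\{r_1,a_1\}$ and $\{r_2,a_2\}$, so that $E_{\rho}(G)\subset E_{\rho}(l_1)\cap E_{\rho}(l_2)$. Each $E_{\rho}(l_i)$ is virtually infinite cyclic by \cite[Theorem 2I]{Tuk94}, so it is enough to show that the intersection contains no element of infinite order; it is then a torsion subgroup of a virtually infinite cyclic group, hence finite.

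The single point requiring more care than in Lemma~\ref{23}---and the only place I expect any obstacle---is excluding infinite-order elements from $E_{\rho}(l_1)\cap E_{\rho}(l_2)$, precisely because an element of $E_{\rho}(l_i)$ may interchange the two fixed points and so need not lie in $E_{\rho}^{+}(l_i)$. I would handle this by passing to squares: if some $g\in E_{\rho}(l_1)\cap E_{\rho}(l_2)$ had infinite order, then $\rho(g)$ would permute each pair $\{r_i,a_i\}$, so $\rho(g^{2})$ would fix $r_1,a_1,r_2,a_2$ individually, i.e. $g^{2}\in E_{\rho}^{+}(l_1)\cap E_{\rho}^{+}(l_2)$, and $g^{2}$ would again have infinite order. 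This contradicts the proof of Lemma~\ref{23}, where $E_{\rho}^{+}(l_1)\cap E_{\rho}^{+}(l_2)$ is shown to contain no element of infinite order (using \cite[Theorem 2G]{Tuk94}). Hence no infinite-order elements survive, and the finiteness of $E_{\rho}(G)$ follows as above, completing the proof.
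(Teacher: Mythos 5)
Your proof is correct and matches the paper's intended argument: the paper's entire proof of this lemma is the remark that it can be proved ``in the same way as the proof of Lemma~\ref{23}'', and your write-up is exactly that proof, with the one genuine wrinkle (elements of $E_{\rho}(l_i)$ may swap the two fixed points) correctly handled by passing to $g^{2}$, which lands in $E_{\rho}^{+}(l_1)\cap E_{\rho}^{+}(l_2)$ and still has infinite order. No gaps.
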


\begin{Lem}\label{4'}
Let $G$ be a countable group 
with a non-elementary convergence group action $(\rho,X)$, 
$l\in G$ be an element such that $\rho(l)$ is loxodromic 
and $g$ be an element of $G$. 
If there exists a positive integer $n$ such that $gl^ng^{-1}=l^n$, then 
$g$ is an element of $E_{\rho}^+(l)$.
\end{Lem}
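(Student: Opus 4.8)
The plan is to show directly that $\rho(g)$ fixes each of the repelling point $r$ and the attracting point $a$ of $\rho(l)$, which is exactly the assertion $g\in \Stab_{\rho}(r)\cap\Stab_{\rho}(a)=E^+_{\rho}(l)$. The key point is that conjugation by $\rho(g)$ carries the repelling point of a convergence sequence to its $\rho(g)$-image, and likewise for the attracting point, so it respects the repelling/attracting labelling and not merely the unordered pair $\{r,a\}$.

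First I would record that, since $n$ is a positive integer, $\{l^{ni}\}_{i\in\N}$ is a subsequence of the convergence sequence $\{l^i\}_{i\in\N}$; hence it is itself a convergence sequence of $(\rho,X)$ with repelling point $r$ and attracting point $a$. Next I would use the hypothesis in the form $gl^{ni}g^{-1}=(gl^ng^{-1})^i=(l^n)^i=l^{ni}$, valid for every $i$. Thus the sequence $\{l^{ni}\}_{i\in\N}$ literally coincides with $\{gl^{ni}g^{-1}\}_{i\in\N}$.

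Now I would compute the repelling and attracting points of $\{gl^{ni}g^{-1}\}_{i\in\N}$ in two ways. On one hand, as just noted it equals $\{l^{ni}\}_{i\in\N}$, whose repelling point is $r$ and attracting point is $a$. On the other hand, conjugating a convergence sequence by a fixed group element transports its repelling and attracting points by $\rho(g)$ (the computation already used in the proof of Proposition \ref{normal}), so the repelling point of $\{gl^{ni}g^{-1}\}_{i\in\N}$ is $\rho(g)r$ and its attracting point is $\rho(g)a$. Since the repelling point and the attracting point of a convergence sequence are uniquely determined, comparing the two descriptions yields $\rho(g)r=r$ and $\rho(g)a=a$, whence $g\in E^+_{\rho}(l)$.

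The only delicate point — and the one I would be careful about — is keeping the two fixed points \emph{labelled}: a purely set-wise argument would only give that $\rho(g)$ preserves $\{r,a\}$, i.e. $g\in E_{\rho}(l)$, which is weaker than the desired conclusion. It is precisely the positivity of $n$ that rescues this, since passing to a positive power leaves $r$ repelling and $a$ attracting; this is what lets the comparison in the previous paragraph pin down $\rho(g)r=r$ and $\rho(g)a=a$ individually.
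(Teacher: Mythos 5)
Your proposal is correct and is essentially the paper's own argument: both proofs observe that $\rho(gl^ng^{-1})$ (equivalently, the conjugated convergence sequence $\{gl^{ni}g^{-1}\}_{i\in\N}$) has repelling point $\rho(g)r$ and attracting point $\rho(g)a$, and then use $gl^ng^{-1}=l^n$ together with the uniqueness of the \emph{labelled} repelling/attracting pair to conclude $\rho(g)r=r$ and $\rho(g)a=a$. Your explicit remark about why positivity of $n$ preserves the labelling is a worthwhile elaboration of a point the paper leaves implicit, but it is the same proof.
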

\begin{proof}
We take the repelling fixed point $r$ of $\rho(l)$ and 
the attracting fixed point $a$ of $\rho(l)$. 
Then $\rho(gl^ng^{-1})$ is a loxodromic element with 
the repelling fixed point $\rho(g)r$ and 
the attracting fixed point $\rho(g)a$.
$gl^ng^{-1}=l^n$ implies that $\rho(g)r=r$ and $\rho(g)a=a$.
\end{proof}

\begin{Lem}\label{4}
Let $G$ be a countable group 
with a non-elementary convergence group action $(\rho,X)$.
Then any finite normal subgroup $M$ of $G$ is contained in $E_{\rho}^+(G)$.
\end{Lem}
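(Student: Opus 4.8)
The plan is to reduce the statement to Lemma \ref{4'} by producing, for each loxodromic element, a positive power that commutes with the given element of $M$. First I would fix an element $m\in M$ and an element $l\in G$ such that $\rho(l)$ is loxodromic; such $l$ exists because the action is non-elementary (\cite[Theorem 2R]{Tuk94}). Since $M$ is normal in $G$, every conjugate $l^n m l^{-n}$ again lies in $M$ for all $n\in\N$.

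Next I would exploit the finiteness of $M$ through a pigeonhole argument. The assignment $n\mapsto l^n m l^{-n}$ takes values in the finite set $M$, so it cannot be injective on $\N$; hence there exist $n_1<n_2$ with $l^{n_1} m l^{-n_1}=l^{n_2} m l^{-n_2}$. Setting $k:=n_2-n_1>0$ and multiplying on the left by $l^{-n_1}$ and on the right by $l^{n_1}$, this rearranges to $m=l^{k} m l^{-k}$, that is, $m l^{k} m^{-1}=l^{k}$.

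With this relation in hand, Lemma \ref{4'} applies directly with $g=m$ and $n=k$: since $m l^{k} m^{-1}=l^{k}$ for the positive integer $k$, we conclude $m\in E_{\rho}^+(l)$. As $l$ ranges over all elements with $\rho(l)$ loxodromic and $m$ ranges over all of $M$, this gives $M\subset E_{\rho}^+(l)$ for every such $l$, and therefore $M\subset E_{\rho}^+(G)$ by the definition of $E_{\rho}^+(G)$ as the intersection of the family $\{E_{\rho}^+(l)\}$.

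The only real point to watch is the pigeonhole step that converts normality together with finiteness of $M$ into commutation of $m$ with a positive power of $l$; once that is secured, no further work with the dynamics of $(\rho,X)$ is required, since Lemma \ref{4'} already packages the relevant fixed-point argument. I therefore expect no analytic obstacle, and the proof should be short.
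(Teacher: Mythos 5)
Your proof is correct and follows essentially the same route as the paper: both reduce the statement to Lemma \ref{4'} by using finiteness of $M$ to produce a positive power of $l$ commuting with $m$. The only cosmetic difference is that the paper obtains such a power uniformly for all $m\in M$ at once via the finite-index centralizer $C_G(M)$, whereas your pigeonhole on the conjugates $l^n m l^{-n}$ produces an exponent $k$ depending on $m$ --- which suffices equally well.
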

\begin{proof}
When we consider a finite normal subgroup $M$ of $G$, 
we have $[G: C_G(M)]<\infty$, where $C_G(M)$ is the centralizer of $M$ in $G$. 
Hence for any element $l\in G$ such that $\rho(l)$ is loxodromic, 
there exists a positive integer $n$ such that $l^n\in C_G(M)$, 
that is, $ml^nm^{-1}=l^n$ for any $m\in M$. 
Thus we have $m\in E_{\rho}^+(l)$ by Lemma \ref{4'}.
\end{proof}

\begin{proof}[Proof of Proposition \ref{kernel}]
$E_{\rho}^+(G)$ is the maximal finite normal subgroup by Lemma \ref{23} and Lemma \ref{4}. 

Since $E_{\rho}(G)$ is a finite normal subgroup of $G$ by Lemma \ref{23'}, 
we have $E_{\rho}(G)\subset E_{\rho}^+(G)$.
Also we have $E_{\rho}(G)\supset E_{\rho}^+(G)$ by definition.

We take an element $l\in G$ such that $\rho(l)$ is loxodromic
and the repelling fixed point $r\in X$ of $\rho(l)$.
Then for any $g\in E_{\rho}^+(G)$, $\rho(g)$ 
fixes every point of the orbit $\rho(G)r$. 
Since $(\rho, X)$ is non-elementary and minimal, $\rho(g)$ 
fixes every point of $X$ by \cite[Theorem 2S]{Tuk94}. 
Thus we have $\Ker \rho\supset E_{\rho}^+(G)$.
Also we have $\Ker \rho\subset E_{\rho}^+(G)$ by definition. 
\end{proof}
\ \\

\noindent
\textbf{Acknowledgements.}
The authors would like to thank Professor Pierre de la Harpe
for giving them useful comments and informing them of \cite{H-P}.

The first author is supported by the Global COE Program 
at Graduate School of Mathematical Sciences, the University of Tokyo, 
and Grant-in-Aid for Scientific Researches for Young Scientists (B) (No. 22740034), 
Japan Society of Promotion of Science.


\begin{thebibliography}{100}
\bibitem{A-M}
G. Arzhantseva; A. Minasyan, 
Relatively hyperbolic groups are $C^\ast$-simple. 
J. Funct. Anal. 243 (2007), no. 1, 345--351.

\bibitem{A-M-O}
G. Arzhantseva; A. Minasyan; D. Osin, 
The SQ-universality and residual properties of relatively hyperbolic groups. 
J. Algebra 315 (2007), no. 1, 165--177.

\bibitem{B-H}
M. Bachir Bekka; Pierre de la Harpe, 
Groups with simple reduced $C^*$-algebras. 
Expo. Math. 18 (2000), no. 3, 215--230.

\bibitem{Bow99a}
B. H. Bowditch,
Convergence groups and configuration spaces. 
Geometric group theory down under (Canberra, 1996), 
23--54, de Gruyter, Berlin, 1999.

\bibitem{Bow97}
B. H. Bowditch, 
Relatively hyperbolic groups. 
Preprint 1997, University of Southampton. 
http://www.maths.soton.ac.uk/pure/preprints.phtml

\bibitem{Fre97}
Eric M. Freden, 
Properties of convergence groups and spaces. 
Conform. Geom. Dyn. 1 (1997), 13--23 (electronic). 

\bibitem{G-M87}
F. W. Gehring; G. J. Martin, 
Discrete quasiconformal groups. I. 
Proc. London Math. Soc. (3) 55 (1987), no. 2, 331--358.

\bibitem{Gro87}
M. Gromov, 
Hyperbolic groups, 
Essays in group theory (S. Gersten, ed.), 75--263, MSRI Publications 8, 
Springer-Verlag, 1987.

\bibitem{Har88}
Pierre de la Harpe,
Groupes hyperboliques, algebres d'operateurs et un theoreme de Jolissaint. (French) [Hyperbolic groups, operator algebras and Jolissaint's theorem] 
C. R. Acad. Sci. Paris Ser. I Math. 307 (1988), no. 14, 771--774.

\bibitem{Har}
Pierre de la Harpe,
On simplicity of reduced $C^\ast$-algebras of groups. 
Bull. Lond. Math. Soc. 39 (2007), no. 1, 1--26. 

\bibitem{H-P}
Pierre de la Harpe; Jean-Philippe Pr\'{e}aux,
C$^*$-simple groups: amalgamated free products, 
HNN extensions, and fundamental groups of 3-manifolds,
ArXiv:0909.3528v2 

\bibitem{Hru10}
G Christopher Hruska,
Relative hyperbolicity and relative quasiconvexity for countable groups. 
Algebr. Geom. Topol. 10 (2010), no. 3, 1807--1856.

\bibitem{Osi06}
Denis V. Osin, 
Relatively hyperbolic groups: 
intrinsic geometry, algebraic properties, and algorithmic problems. 
Mem. Amer. Math. Soc. 179 (2006), no. 843  

\bibitem{Pow75}
Robert T. Powers,  
Simplicity of the $C^{\ast} $-algebra associated with the free group on two generators. 
Duke Math. J. 42 (1975), 151--156.

\bibitem{Tuk94}
Pekka Tukia, 
Convergence groups and Gromov's metric hyperbolic spaces. 
New Zealand J. Math. 23 (1994), no. 2, 157--187.

\bibitem{Yam04}
Asli Yaman, 
A topological characterisation of relatively hyperbolic groups. (English summary) 
J. Reine Angew. Math. 566 (2004), 41--89.

\end{thebibliography}
\end{document}